\newcommand{\f}{\frac}
\newcommand{\ds}{\displaystyle}
 \newtheorem{thm}{Theorem}[section]
 \newtheorem{cor}[thm]{Corollary}
 \newtheorem{lem}[thm]{Lemma}
 \newtheorem{prop}[thm]{Proposition}
 \theoremstyle{definition}
 \theoremstyle{remark}
 \numberwithin{equation}{section}
\begin{document}

\title[On the tensor square of non-abelian nilpotent finite dimensional Lie algebras]
 {On the tensor square of non-abelian nilpotent finite dimensional Lie algebras}

\author[P. Niroomand]{Peyman Niroomand}
\address{School of Mathematics and Computer Science\\
Damghan University of Basic Sciences, Damghan, Iran}
\email{niroomand@dubs.ac.ir}

\thanks{\textit{Mathematics Subject Classification 2010.} Primary 17B30; Secondary 17B60}


\keywords{ nilpotent Lie algebra; tensor square; Schur multiplier.}



\begin{abstract}
For every finite $p$-group $G$ of order $p^n$ with derived subgroup
of order $p^m$, Rocco in \cite{roc} proved that the order of tensor
square of $G$ is at most $p^{n(n-m)}$. This upper bound has been
improved recently by author in \cite{ni}. The aim of the present
paper is to obtain a similar result for a non-abelian nilpotent Lie
algebra of finite dimension. More precisely, for any given
$n$-dimensional non-abelian nilpotent Lie algebra $L$ with derived
subalgebra of dimension $m$ we have $\mathrm{dim} (L\otimes L)\leq
(n-m)(n-1)+2$. Furthermore for $m=1$, the explicit structure of $L$
is given when the equality holds.
\end{abstract}

\maketitle
\section{Introduction}
The theory of tensor product of groups has been developed for Lie
algebras by Ellis in \cite{el1,el2}. A question naturally can be
asked here: which results on tensor square of groups, holds for Lie
algebras? every partial answer to this question may be an
interesting result in the theory of Lie algebras. Several authors
have been obtained in \cite{es3, es2, el1, el3, el2, es4, mo, sa}
similar argument to known result in group theory. In this article we
focus on a result of Rocco \cite{roc} which states for every finite
$p$-group of order $p^n$ with derived subgroup of order $p^m$, the
order of its tensor square is at most $p^{n(n-m)}$. Salemkar et. al.
\cite{sa} proved the same statement for Lie algebras of finite
dimension. Recently the author \cite{ni}, improved the upper bound
of Rocco and proved the order of tensor square is at most
$p^{(n-1)(n-m)+2}$. Furthermore when $m=1$ the structure of groups
under which the equality holds is completely described. The aim of
this paper is to show the similar inequality does hold for nilpotent
Lie algebras of finite dimension. As easily seen the ratio of two
mentioned upper bounds is $p^{n-(m+2)}$ which is a considerable
quantity so the result of this paper sharpens the inequality in
\cite{sa} a considerable amount.

\section{Tensor product and some known results}
 In this section first we introduce notation and terminology which are used in this paper.
 Then we summarize some known results without proof
 which will be used throughout the paper.

 Throughout  this paper let $F$ be a fixed field and $[,]$ denotes the Lie bracket.
For any two arbitrary Lie Algebras $M$ and $N$, an action of $M$ on
$N$ means a $F$-bilinear map $M\times N \longrightarrow N$ sending
$(m,n)$ to $^mn$ satisfying
\[^{[m,m']}n=^m(^{m'}n)-^{m'}(^mn)~\text{and}~ ^m[n,n']=[^mn,n']+[n,^mn'],\]
for all $m,m' \in M$ and $n,n' \in N$.

Lie multiplication in a Lie algebra $M$ can induce an action on
itself via $^{m'}m=[m',m]$ for all $m,m'\in M$. Also for any Lie
algebra $L$ a bilinear function $h:M\times N\longrightarrow L$ is
called a Lie pairing if for all $m,m' \in M$ and $n,n' \in N$
provided that
 \[\begin{array}{lcl}h([m, m'],n) = h(m, ^{m'}n) - h(m', ^mn),&&\vspace{.3cm}\\
h(m, [n, n']) = h(^{n'}m, n) - h(^nm, n'),&&\vspace{.3cm}\\
h(^nm,^{m'}n') = -[h(m,n),h(m',n')].\end{array}\]

The tensor product $M\otimes N$ is the Lie algebra generated by the
symbols $m\otimes n ~(m\in M, n\in N)$ subject to the following
relations:
\begin{itemize}
\item[(i)] $c(m\otimes n) = cm\otimes n = m\otimes cn,$
\item[(ii)] $(m+m')\otimes n = m\otimes n + m'\otimes n,$
$m\otimes(n + n') = m\otimes n + m\otimes n',$
\item[(iii)] $[m, m']\otimes n = m\otimes(^{m'}n)-m'\otimes(^mn),$
$m\otimes[n, n'] = (^{n'}m)\otimes n - (^nm)\otimes n',$
\item[(iv)] $[(m\otimes n), (m'\otimes n')] = -(n^m) \otimes(^{m'}n')$ for all $c\in F$, $m, m'\in M$ and $ n, n'\in N$.
\end{itemize}

Let $M$ and $N$ be two Lie algebras acting on each other. We are
able to find more results on the non-abelian tensor product
$M\otimes N$ by putting a condition on these actions. The actions
are called compatible if
\[^{(^nm)}{n'} = [n',^mn]~\text{and}~ ^{(^mn)}{m'}= [m',^nm] ~~\text{for all $m,m'\in M,n,n'\in N$}.\]
In the case, $M=N$ and all actions are given by Lie multiplication,
then $M\otimes M$ is called tensor square of $M$. The exterior
product $L\wedge L$ is obtained from $L\otimes L$ by imposing the
additional relation $l\otimes l$ for all $l$ in $L$ and the image of
$l\otimes l'$ is denoted by $l\wedge l'$ for all $l,l'\in L$ (see
\cite{el3,el1}).

By terminology of \cite{el2}, let $L\square L$ be the submodule of
$L\otimes L$ generated by the elements $l\otimes l$. One can check
that $L\square L$ lies in the centre of $L\otimes L$ and $L\wedge
L\cong L\otimes L/L\square L$.

Ellis in \cite{el3} showed that, $\mathcal{M}(L)$,  the Schur
multiplier of $L$, is isomorphic to the Kernel $(L\wedge
L\longrightarrow L^2, l\wedge l'\longrightarrow [l, l']).$  He also
in \cite{el2} generalized the universal quadratic functor $\Gamma$
(see \cite{wh} for more details). The quadratic functor $\Gamma$ let
us study the relation between the Lie exterior product and the Lie
tensor product.

 Now we mention the following consequence of propositions from \cite{el2,wh,sa} for the convenience of the reader.
 \begin{prop}$\mathrm{(See}$ \cite[Proposition 15, 16]{el2}\label{ta}$\mathrm{).}$ If $M$ and $N$ act trivially on each other, then
  there is an isomorphism
\[M\otimes N\cong  M\bigotimes_{mod}N, ~\text{where}~M\bigotimes_{mod}N\]
is the standard tensor product of $M$ and $N$.
 \end{prop}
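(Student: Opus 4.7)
The plan is to exploit how the trivial-action hypothesis collapses the defining relations of the non-abelian tensor product $M\otimes N$, reducing it to an object with the universal property of the standard module tensor product, and then to write down mutually inverse maps between the two constructions.

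First I would observe that under trivial actions (i.e.\ ${}^m n=0$ and ${}^n m=0$ for all $m\in M$, $n\in N$), relation (iv) forces
\[
[(m\otimes n),(m'\otimes n')] = -({}^n m)\otimes({}^{m'}n') = 0,
\]
so $M\otimes N$ is an abelian Lie algebra, i.e.\ merely an $F$-module with zero bracket. Moreover, the mixed relations (iii) degenerate to $[m,m']\otimes n = 0$ and $m\otimes[n,n']=0$, because every term on the right-hand side involves a trivial action. Hence the generating map $(m,n)\mapsto m\otimes n$ is $F$-bilinear (by (i), (ii)) and vanishes on commutators in either slot.

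Next I would assemble the isomorphism via universal properties. On the one hand, the bilinear assignment $(m,n)\mapsto m\otimes n$ induces, by the universal property of the standard tensor product, a unique $F$-linear map $\beta\colon M\bigotimes_{mod}N\to M\otimes N$. On the other hand, I would define $\alpha\colon M\otimes N\to M\bigotimes_{mod}N$ on generators by $m\otimes n\mapsto m\bigotimes_{mod}n$; well-definedness reduces to checking that the four defining relations of $\otimes$ are respected in the target: (i) and (ii) by $F$-bilinearity of $\bigotimes_{mod}$, (iv) because $M\bigotimes_{mod}N$ is viewed as an abelian Lie algebra, and (iii) by the trivial-action hypothesis applied in the target. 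Mutual inversion of $\alpha$ and $\beta$ is then immediate on generators.

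The main obstacle, I expect, is the careful verification that $\alpha$ respects relation (iii): this is the only step which requires substantive use of the trivial-action hypothesis, and it is precisely where the convention for $M\bigotimes_{mod}N$ must be pinned down (e.g.\ interpreted as the $F$-module tensor product of $M^{ab}$ and $N^{ab}$ when $M$ or $N$ is non-abelian), so that $[M,M]\bigotimes_{mod}N$ and $M\bigotimes_{mod}[N,N]$ actually vanish in the target. Once this interpretation is fixed, everything else in the argument is a purely formal manipulation of universal properties.
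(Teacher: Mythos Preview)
The paper does not give its own proof of this proposition: it is quoted from Ellis \cite{el2} as one of the ``known results without proof'' announced at the start of Section~2, so there is nothing in the paper to compare your argument against. Your outline is the standard verification and is correct.

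The interpretive point you raise at the end is genuine and worth stating outright rather than as a caveat. Under trivial actions, relation (iii) forces $[m,m']\otimes n=0$ and $m\otimes[n,n']=0$ in $M\otimes N$, so the isomorphism can only land in $M\bigotimes_{mod}N$ if that object is read as $M^{ab}\otimes_F N^{ab}$ (equivalently, if one tacitly assumes $M$ and $N$ abelian). The paper's own usage confirms this reading: in the proof of Theorem~\ref{mt} the proposition is invoked to get $H\otimes A\cong H/H^2\bigotimes_{mod}A$ and $\dim(N\otimes L)=\dim(N\bigotimes_{mod}L/L^2)$, both of which pass to abelianizations on the right-hand side. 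Once that convention is fixed, your maps $\alpha$ and $\beta$ are well defined and mutually inverse exactly as you describe, and the argument is complete.
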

 \begin{prop}$\mathrm{(See}$ \cite[Proposition 15, 16]{el2}\label{gam} $\mathrm{and}$ \cite{wh}$\mathrm{).}$
 \begin{itemize} \item[(i)]For any two Lie algebras  $M$ and $N$ there is an isomorphism
\[\Gamma(M\oplus N) = \Gamma(M)\oplus\Gamma(N)\oplus M\bigotimes_{mod} N.\]
\item[(ii)]There is an isomorphism  $\Gamma(F)\cong F$.
\end{itemize}
\end{prop}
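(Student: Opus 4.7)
The plan is to derive both parts directly from the universal property of the quadratic functor $\Gamma$. Recall from Whitehead \cite{wh}, as generalized to the Lie-algebraic setting by Ellis \cite{el2}, that $\Gamma(A)$ is characterized as the universal target of a \emph{quadratic} map $\gamma\colon A \to \Gamma(A)$, that is, one satisfying $\gamma(-a)=\gamma(a)$ and whose cross-term $(a,a') \mapsto \gamma(a+a') - \gamma(a) - \gamma(a')$ is $F$-bilinear.

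For part (i), I would argue that specifying a quadratic map out of $M \oplus N$ into any target is the same as specifying a triple consisting of a quadratic map on $M$, a quadratic map on $N$, and a bilinear map $M \times N$ into that target. The two restrictions $\gamma|_M$ and $\gamma|_N$ supply the first two ingredients, while the cross-term $b(m,n) := \gamma(m+n) - \gamma(m) - \gamma(n)$ is bilinear by hypothesis, hence factors through the ordinary module tensor product $M\otimes_{mod}N$ by Proposition \ref{ta}. Conversely, such a triple reassembles into a quadratic map on $M \oplus N$ via $(m,n) \mapsto \gamma_M(m) + \gamma_N(n) + b(m,n)$, and one checks the quadratic axioms directly. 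Comparing universal objects for these equivalent pieces of data produces the decomposition $\Gamma(M\oplus N) \cong \Gamma(M) \oplus \Gamma(N) \oplus M\otimes_{mod}N$. For part (ii), the 1-dimensional Lie algebra $F$ has trivial bracket, and the scaling identity $\gamma(cx)=c^2\gamma(x)$ forces $\Gamma(F)$ to be generated as an $F$-module by $\gamma(1)$; exhibiting the explicit non-zero quadratic map $c \mapsto c^2$ from $F$ to $F$ then certifies by universality that this generator is not annihilated, yielding $\Gamma(F) \cong F$.

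The main subtlety to guard against is conflating Ellis's Lie-algebraic incarnation of $\Gamma$ with Whitehead's classical module-level version; one must verify that the extra Lie structure imposes no additional relations in the situations at hand. However, the direct sum in (i) carries the zero cross-action of each summand on the other, and the one-dimensional object $F$ in (ii) is abelian, so all Lie-theoretic contributions are trivial. Consequently both statements reduce cleanly to the classical quadratic-functor argument, which is the route taken in \cite{el2, wh, sa}.
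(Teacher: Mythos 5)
The paper offers no proof of this proposition: it is quoted verbatim from Ellis \cite[Propositions 15, 16]{el2} and Whitehead \cite{wh}, so there is nothing internal to compare against. Your universal-property argument is the standard proof from those sources and is correct: a quadratic map out of $M\oplus N$ is exactly a quadratic map on $M$, a quadratic map on $N$, and a bilinear cross-term, and comparing universal objects gives (i), while the cyclicity $\gamma(c)=c^{2}\gamma(1)$ together with the nonzero quadratic map $c\mapsto c^{2}$ gives (ii). One small correction: the factorization of the bilinear cross-term through $M\bigotimes_{mod}N$ is just the universal property of the ordinary module tensor product, not Proposition \ref{ta}, which concerns the nonabelian tensor product under trivial actions; this does not affect the validity of the argument.
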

\begin{prop}$\mathrm{(See}$ \cite[Remark 1.5 (d)]{sa}$\mathrm{and}$ \cite[Proposition 14, 15]{el2}$\mathrm{).}$\label{ps} Let $L$ be a Lie algebra, Then :
\begin{itemize}
\item[(i)]If $N\subseteq L^2\cap Z(G)$, then the sequence \[N\otimes L\longrightarrow L\otimes L\longrightarrow L/N\otimes L/N\longrightarrow 0\] is exact.
\item[(ii)] The sequence \[\Gamma(L/L^2)\stackrel{\psi}\longrightarrow L\otimes L \stackrel{\pi}\longrightarrow L\wedge L \longrightarrow 0\] is exact.
\item[(iii)] In particular, if  $L/L^2$ is a free Lie algebra, then the
sequence \[0\longrightarrow
\Gamma(L/L^2)\stackrel{\psi}\longrightarrow L\otimes L
\stackrel{\pi}\longrightarrow L\wedge L \longrightarrow 0\] is
exact.
\end{itemize}
\end{prop}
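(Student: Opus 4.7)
Since Proposition \ref{ps} consolidates results attributed to \cite{el2}, \cite{wh}, and \cite{sa}, the plan is to verify each part separately by exploiting the universal properties of the Lie tensor product and of the quadratic functor $\Gamma$. For (i), I would first check that $L \to L/N$ induces a well-defined map $L \otimes L \to L/N \otimes L/N$; centrality of $N$ ensures that the induced actions behind relations (iii)--(iv) descend to the quotient. Surjectivity is immediate on generators, and the inclusion $\operatorname{im}(N\otimes L) \subseteq \ker$ is visible. For the reverse inclusion I would invoke the universal property: any Lie pairing $L \times L \to A$ vanishing on $\operatorname{im}(N\otimes L)$ must factor through $L/N \times L/N$. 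The delicate generators are $l\otimes n$ with $n\in N$; writing $n = \sum[a_i,b_i]$ via $N \subseteq L^2$ and applying relation (iii), one rewrites $l\otimes n$ in terms of elements involving $n$ itself, then uses centrality together with an induction on bracket length to place them in $\operatorname{im}(N\otimes L)$.

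For (ii), $\ker(\pi) = L\square L$ by the definition of the exterior product, so the task is to identify $L\square L$ with the image of $\psi$. I would define $\psi$ from the universal quadratic map by $\gamma(\bar l) \mapsto l\otimes l$ and verify well-definedness on $L/L^2$: the associated polar bilinear map $(l, l')\mapsto l\otimes l' + l'\otimes l$ must vanish on $L \otimes L^2 + L^2 \otimes L$, which follows by expanding expressions of the form $l'\otimes[a,b] + [a,b]\otimes l'$ via relations (iii)--(iv) and simplifying. Surjectivity onto $L\square L$ is immediate from its generating set. For (iii), freeness of $L/L^2$ permits the construction of a splitting of $\psi$ in the spirit of Whitehead's original analysis of $\Gamma$ \cite{wh}, which yields injectivity.

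The principal obstacle is the quadratic-to-bilinear bookkeeping in (ii): one must simultaneously verify that $\psi$ is well-defined on $\Gamma(L/L^2)$ and that its image coincides with $L\square L$, which hinges on a careful combination of the defining relations of the Lie tensor product with the universal quadratic identity. The freeness hypothesis in (iii) enters specifically to rule out hidden torsion or relations in $\Gamma(L/L^2)$ that would enlarge $\ker(\psi)$; without it, short-exactness on the left can genuinely fail, so this extra input is essential rather than cosmetic.
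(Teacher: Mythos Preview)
The paper does not supply its own proof of this proposition: it is stated as a ``known result'' with citations to \cite[Remark~1.5(d)]{sa} and \cite[Propositions~14,~15]{el2}, and no argument is given in the text. Consequently there is nothing in the paper to compare your proposal against.

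That said, your sketch is a reasonable outline of how the cited references establish these facts, and is broadly in the right spirit. A few remarks on points where you should tighten the argument if you intend to write it out in full. In~(i), the claim that elements of the form $l\otimes n$ with $n\in N$ lie in the image of $N\otimes L\to L\otimes L$ is the genuine content; your ``induction on bracket length'' is vague, and in practice one uses the identity $l\otimes[a,b]=[b,l]\otimes a-[a,l]\otimes b$ together with the companion identity for $[a,b]\otimes l$ and centrality of $N$ to show that $l\otimes n + n\otimes l$ already lies in the image of $N\otimes L$, whence the two-sided kernel collapses to the one-sided image. In~(ii), your plan is exactly the standard one: $\ker\pi=L\square L$ by definition, and $\psi$ is the map induced by the universal quadratic property of $\Gamma$; the well-definedness modulo $L^2$ is the only nontrivial verification. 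In~(iii), ``freeness'' here just means $L/L^2$ is a free $F$-module (which over a field is automatic), and the injectivity of $\psi$ then follows from Whitehead's computation of $\Gamma$ on free modules together with a dimension/basis argument rather than an abstract splitting.
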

The following proposition provides the behavior of tensor product
with direct sums.
\begin{prop}$\mathrm{(See}$ \cite[Proposition 8]{el2}$\mathrm{).}$\label{ds}
There is an isomorphism
\[(M\oplus N)\otimes (M\oplus N)\cong (M\otimes M)\oplus (M\otimes N)\oplus (N\otimes M )\oplus (N\otimes N).\]
\end{prop}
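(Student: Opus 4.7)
The plan is to produce mutually inverse Lie-algebra homomorphisms between the two sides. Set $L=M\oplus N$ and let $i_M,i_N,\pi_M,\pi_N$ denote the canonical inclusions and projections. Because $M$ and $N$ are ideals of $L$ satisfying $[M,N]=0$, the cross-actions that $M$ and $N$ inherit from Lie multiplication in $L$ are trivial, so each of the four summands $M\otimes M$, $M\otimes N$, $N\otimes M$, $N\otimes N$ is a well-defined non-abelian Lie tensor product.

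In the easy direction, the inclusions $i_M,i_N$ are action-preserving, so by the universal property of the non-abelian tensor product the four natural pairings $(x,y)\mapsto x\otimes y$ (one for each choice of factor $M$ or $N$ on each side) induce Lie homomorphisms from the corresponding summand into $L\otimes L$. Their sum
\[
\Phi:(M\otimes M)\oplus(M\otimes N)\oplus(N\otimes M)\oplus(N\otimes N)\longrightarrow L\otimes L
\]
is surjective, since by bilinearity (relation~(ii))
\[
(m+n)\otimes(m'+n')=m\otimes m'+m\otimes n'+n\otimes m'+n\otimes n'.
\]

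For the inverse, I would construct a Lie pairing
\[
h:L\times L\longrightarrow (M\otimes M)\oplus(M\otimes N)\oplus(N\otimes M)\oplus(N\otimes N)
\]
by the formula
\[
h(l,l')=\pi_M(l)\otimes\pi_M(l')+\pi_M(l)\otimes\pi_N(l')+\pi_N(l)\otimes\pi_M(l')+\pi_N(l)\otimes\pi_N(l'),
\]
where the codomain carries its direct-sum Lie structure (cross-brackets zero). The universal property of $L\otimes L$ would then yield a homomorphism $\Psi$, and the verifications $\Phi\Psi=\mathrm{id}$ and $\Psi\Phi=\mathrm{id}$ are immediate on generators once $\Psi$ is known to be well-defined.

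The main obstacle is verifying that $h$ is genuinely a Lie pairing, i.e.\ that it satisfies the three defining identities recorded in the preliminaries. The essential point is the decomposition $[l,l']=[m,m']+[n,n']$ for $l=m+n$, $l'=m'+n'\in L$, which holds because $[M,N]=0$; consequently $\pi_M({}^{l}l')={}^{m}m'$ and $\pi_N({}^{l}l')={}^{n}n'$. Projecting by $\pi_M,\pi_N$ therefore sorts every term arising in a Lie-pairing axiom into a single summand at a time, while all potential cross-summand expressions (e.g.\ $\pi_M(l)\otimes\pi_N({}^{l'}l''))$ reduce via relation~(iii) inside the respective $A\otimes B$, using triviality of the cross-actions. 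A routine case split on the $M$-$N$ components of each argument of $h$ thus reduces each Lie-pairing identity for $h$ to the corresponding identity inside a single tensor summand, completing the verification.
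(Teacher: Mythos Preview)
The paper does not prove this proposition at all: it is quoted without proof from \cite[Proposition~8]{el2} in the preliminary section of known results. So there is no ``paper's own proof'' to compare against; your argument supplies what the paper simply imports.

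Your approach is sound and is essentially the standard one (and indeed the one Ellis uses). A couple of points worth tightening. First, when you form $\Phi$ as the \emph{sum} of four Lie homomorphisms into $L\otimes L$, you should remark that the images pairwise commute in $L\otimes L$: for instance $[m\otimes m',\,n\otimes n']=-\bigl({}^{m'}m\bigr)\otimes\bigl({}^{n}n'\bigr)$, and the second factor vanishes because $[M,N]=0$; the remaining cases are similar. Without this observation $\Phi$ is a priori only linear. Second, in verifying the third Lie-pairing axiom for $h$ you will meet terms such as $[m_1\otimes n_2,\,m_3\otimes n_4]$ inside $M\otimes N$; these vanish because $M\otimes N$ is abelian under trivial cross-actions (cf.\ Proposition~\ref{ta}), which you use implicitly but might state explicitly. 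With these two remarks made precise, the mutual-inverse check on generators is indeed immediate.
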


Recall that a finite dimensional Lie algebra $L$ is called
Heisenberg if $L^2 = Z(L)$ and $\mathrm{dim}L^2=1$. Such algebras
are odd dimensional with basis $v_1,\ldots, v_{2m}, v$ and the only
non-zero multiplication between basis elements is $[v_{2i-1},
v_{2i}] = -[v_{2i}, v_{2i-1}] = v$ for $i = 1,\ldots m$. The symbol
$H(m)$ denotes the Heisenberg algebra of dimension $2m + 1$.
 The tensor square of Heisenberg algebra will be characterized in the next section by using the following result.

\begin{lem}$\mathrm{(See}$ \cite[Example 3]{es2} $\mathrm{and}$ \cite[Theorem 24]{mo}$\mathrm{).}$\label{h}
\begin{itemize}
\item[(i)]$\mathrm{dim}(\mathcal{M}(H(1)))=2$.
\item[(ii)]$\mathrm{dim}(\mathcal{M}(H(m)))=2m^2-m-1$ for all $m\geq 2$.
\end{itemize}
\end{lem}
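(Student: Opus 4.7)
The plan is to compute $\dim\mathcal{M}(H(m))$ from the Ellis identification
\[ 0 \longrightarrow \mathcal{M}(L) \longrightarrow L\wedge L \longrightarrow L^2 \longrightarrow 0, \]
recalled in the discussion preceding Proposition \ref{ps}. For $L=H(m)$ the derived ideal $L^2=\langle v\rangle$ is one-dimensional, and the commutator map is visibly surjective because $x_1\wedge x_2\mapsto v$. Hence the problem reduces to computing $\dim(H(m)\wedge H(m))$ and reading off $\dim\mathcal{M}(H(m))=\dim(H(m)\wedge H(m))-1$.

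To bound $\dim(L\wedge L)$ from above I would first produce a spanning set. Using the Heisenberg basis $x_1,\dots,x_{2m},v$, the elementary wedges $x_i\wedge x_j$ ($i<j$) together with the $x_i\wedge v$ visibly span $L\wedge L$. Next I would exploit the Lie relation
$[l,l']\wedge l'' = l\wedge [l',l''] - l'\wedge [l,l'']$
applied to $[x_{2a-1},x_{2a}]=v$, which yields
$v\wedge x_k = x_{2a-1}\wedge [x_{2a},x_k] - x_{2a}\wedge [x_{2a-1},x_k]$.
When $m\ge 2$ one may always choose the pair $\{2a-1,2a\}$ disjoint from $k$, killing both brackets on the right; hence $v\wedge x_k=0$ for every $k$, and $L\wedge L$ is spanned by the $\binom{2m}{2}=2m^2-m$ wedges $x_i\wedge x_j$. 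For $m=1$ this disjointness trick is unavailable, and only the three elements $x_1\wedge x_2$, $v\wedge x_1$, $v\wedge x_2$ remain.

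The main obstacle is showing that these spanning sets carry no hidden linear dependencies, which I would handle by exhibiting explicit target spaces of matching dimension. For $m\ge 2$ take $V=\wedge^2(L/L^2)$ with the alternating bilinear map $L\times L\to V$, $(l,l')\mapsto \bar l\wedge\bar l'$; the Lie identity is automatic because $H(m)$ is $2$-step nilpotent, so every commutator is killed in $L/L^2$. By the universal property the pairing descends to a surjection $L\wedge L\twoheadrightarrow V$ of dimension $\binom{2m}{2}$, and this already matches the size of the spanning set, forcing $\dim(L\wedge L)=2m^2-m$. For $m=1$ I would take $V=F^3$ with basis $e,f,g$ and the alternating pairing $(x_1,x_2)\mapsto e$, $(v,x_1)\mapsto f$, $(v,x_2)\mapsto g$, zero on the remaining basis pairs; the Lie identity needs to be checked only on the single non-trivial bracket $[x_1,x_2]=v$, which is a short direct verification and shows the spanning set $\{x_1\wedge x_2,\,v\wedge x_1,\,v\wedge x_2\}$ is linearly independent. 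Plugging the resulting dimensions into the exact sequence yields $\dim\mathcal{M}(H(m))=2m^2-m-1$ for $m\ge 2$ and $\dim\mathcal{M}(H(1))=2$.
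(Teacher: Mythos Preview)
Your argument is correct. The paper itself does not prove this lemma: it is stated purely as a citation of \cite[Example~3]{es2} and \cite[Theorem~24]{mo}, with no argument given. So there is nothing to compare against; you are supplying a proof where the paper supplies none. Your route via the exact sequence $0\to\mathcal{M}(L)\to L\wedge L\to L^2\to 0$ and an explicit computation of $\dim(H(m)\wedge H(m))$ is sound: the reduction $v\wedge x_k=0$ for $m\ge 2$ is exactly the right use of the relation $[l,l']\wedge l''=l\wedge[l',l'']-l'\wedge[l,l'']$, and the two target spaces you build (namely $\wedge^2(L/L^2)$ for $m\ge 2$ and $F^3$ for $m=1$) are legitimate alternating Lie pairings because the target is abelian and $h(L^2,L^2)=h(\langle v\rangle,\langle v\rangle)=0$, so the third pairing axiom is automatic. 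The only cosmetic point is that when you say ``zero on the remaining basis pairs'' for the $m=1$ pairing you of course mean after imposing skew-symmetry; this is clear from context. What your approach buys over a bare citation is a self-contained derivation inside the exterior-product framework already set up in the paper, at the modest cost of the bilinear-form verifications.
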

Let $t(L)=\f{1}{2}n(n-1)-\mathrm{dim}\mathcal{M}(L)$ and $A(n)$
denoted abelian Lie algebra of dimension $n$. In \cite{es3,es4} the
structure on $L$ has been characterized only by the size of
$\mathcal {M}(L)$ as follows.
\begin{thm}$\mathrm{(See}$ \cite[Lemmas 2, 3 and Theorems 3, 5]{es3} $\mathrm{and}$ \cite[Theorem 1]{es4}$\mathrm{).}$ \label{sm}
Let $L$ be an $n$-dimensional nilpotent Lie algebra. Then
\begin{itemize}
\item[(i)] $t(L)\geq0$;
\item[(ii)] $t(L)=0$ if and only if $L\cong A(n);$
\item[(iii)] $t(L)=1$ if and only if $L\cong H(1);$
\item[(iv)] $t(L)=2$ if and only if $L\cong H(1)\oplus A(1);$
\item[(v)]$t(L)=3$ if and only if $L\cong H(1)\oplus A(2).$
\end{itemize}
\end{thm}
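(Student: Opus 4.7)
The plan is to treat parts (i)--(v) in turn, reducing the structural statements (iii)--(v) to a short low-dimensional case analysis via the behavior of $\mathcal{M}$ under direct sums. For (i), observe that $L\wedge L$ is spanned by the antisymmetric products $l_i\wedge l_j$ with $1\le i<j\le n$ over any basis, so $\dim(L\wedge L)\le\binom{n}{2}$; together with the exact sequence $0\to\mathcal{M}(L)\to L\wedge L\to L^2\to 0$ this yields $\dim\mathcal{M}(L)\le\binom{n}{2}-\dim L^2$, so $t(L)\ge\dim L^2\ge 0$. For (ii), $t(L)=0$ forces $\dim L^2=0$, making $L$ abelian; conversely, if $L=A(n)$ then Proposition~\ref{ta} gives $A(n)\otimes A(n)\cong A(n)\otimes_{\mathrm{mod}} A(n)$, so $\mathcal{M}(A(n))\cong A(n)\wedge A(n)$ has dimension $\binom{n}{2}$.

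For (iii)--(v), the key tool is the direct sum decomposition
\[\mathcal{M}(A\oplus B)\cong\mathcal{M}(A)\oplus\mathcal{M}(B)\oplus\bigl((A/A^2)\otimes_{\mathrm{mod}}(B/B^2)\bigr),\]
obtained by combining Propositions~\ref{gam}, \ref{ds}, and the exact sequence in Proposition~\ref{ps}(iii). Writing $L=L_0\oplus A(k)$ with $L_0$ having no abelian direct summand and setting $\dim L_0=n_0$, $\dim L_0^2=m_0$, a short calculation yields
\[t(L)=t(L_0)+km_0.\]
A sharpened form of the step~1 bound -- obtained by using relation~(iii) of the tensor square to cut down the contribution of pairs $(z,l)$ with $z\in L^2\cap Z(L)$ -- gives $\dim\mathcal{M}(L)\le\binom{n-1}{2}+1$ whenever $L$ is non-abelian, hence $t(L)\ge n-2$. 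Thus $t(L)=1$ forces $n=3$, $t(L)=2$ forces $n\le 4$, and $t(L)=3$ forces $n\le 5$. Enumerating decompositions $L=L_0\oplus A(k)$ with $L_0$ indecomposable non-abelian then leaves only the three algebras named in (iii)--(v), plus a short list of potential indecomposable candidates in dimensions $4$ and $5$.

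The main obstacle will be ruling out these indecomposable candidates. In dimension~$4$ there is a unique indecomposable non-abelian nilpotent Lie algebra (the filiform one), and in dimension~$5$ there is a small finite list that includes $H(2)$. For each such algebra one computes $\dim\mathcal{M}$ either via the Hopf formula $\mathcal{M}(F/R)\cong(R\cap F^2)/[F,R]$ applied to a minimal free presentation, or -- for $H(2)$ -- by Lemma~\ref{h}(ii), which gives $t(H(2))=\binom{5}{2}-5=5$. Verifying case by case that every such indecomposable satisfies $t\ge 4$ completes (iii)--(v); the remaining bookkeeping then follows routinely from the direct sum formula and the refined bound.
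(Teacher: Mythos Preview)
The paper does not supply a proof of this theorem: it appears in Section~2 among the ``known results without proof'' and is attributed entirely to \cite{es3} and \cite{es4}. There is therefore no proof in the paper to compare your proposal against.

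As a standalone reconstruction your outline is reasonable and close in spirit to the original sources, but two steps deserve tightening. First, your justification of the refined bound $\dim\mathcal{M}(L)\le\binom{n-1}{2}+1$ (equivalently $t(L)\ge n-2$) is only a gesture: the relation you invoke shows that each $z\wedge l$ with $z\in L^{2}\cap Z(L)$ lies in the span of other wedges, but extracting the precise ``$+1$'' requires a genuine induction on $\dim L^{2}$ via the exact sequence of Proposition~\ref{ps}(i), with the base case $\dim L^{2}=1$ handled through the decomposition $L\cong H(m)\oplus A$ together with Lemma~\ref{h}. Second, the ``small finite list'' of indecomposable nilpotent Lie algebras in dimension~$5$ is field-dependent and is where the real work of \cite{es4} lies; your proposal correctly isolates this as the main obstacle but defers rather than resolves it. With those two points filled in, the argument goes through.
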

\section{Main Results}
In this section at first we obtain the tensor square of Heisenberg
algebras. Then we show that any $n$-dimensional non-abelian
nilpotent Lie algebra  $L$ with derived subalgebra of dimension $m$
satisfies
\[\mathrm{dim}(L\otimes L)\leq (n-m)(n-1)+2.\] In addition  for $m=1$, we obtain the explicit structure of $L$ when the equality holds.

\begin{lem}\label{de}For any two abelian Lie algebras $M$ and $N$ there is an isomorphism
\[(M\oplus N)\square (M\oplus N)\cong M\square M\oplus N\square N\oplus M\otimes N.\]
\end{lem}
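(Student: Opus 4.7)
The plan is to embed $(M\oplus N)\square(M\oplus N)$ inside the four-fold decomposition of $(M\oplus N)\otimes(M\oplus N)$ provided by Proposition \ref{ds} and identify the image component by component. Under that decomposition, for $m\in M$ and $n\in N$ the defining generators expand by bilinearity as
\[(m+n)\otimes(m+n)=m\otimes m+m\otimes n+n\otimes m+n\otimes n,\]
with the four summands lying in the four distinct direct summands $M\otimes M$, $M\otimes N$, $N\otimes M$, $N\otimes N$.

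Specializing $n=0$ shows that all generators $m\otimes m$ of $M\square M$ appear inside $(M\oplus N)\square(M\oplus N)$; specializing $m=0$ yields all the generators of $N\square N$. Subtracting these from the general generator leaves elements of the form $m\otimes n+n\otimes m$ living in the cross piece $(M\otimes N)\oplus(N\otimes M)$. Let $W$ denote the $F$-subspace spanned by all such elements. Since $M\square M\subseteq M\otimes M$, $N\square N\subseteq N\otimes N$, and $W\subseteq(M\otimes N)\oplus(N\otimes M)$ sit in three disjoint summands of Proposition \ref{ds}, their sum is automatically direct, and they generate the whole of $(M\oplus N)\square(M\oplus N)$.

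It remains to identify $W$ with $M\otimes N$. Because both $M$ and $N$ are abelian and are embedded as ideals of the direct sum $M\oplus N$, the induced actions of $M$ on $N$ and of $N$ on $M$ are trivial, so Proposition \ref{ta} applies and yields $M\otimes N\cong M\bigotimes_{mod}N$ together with the analogous identification for $N\otimes M$. I would then define the linear map
\[\varphi\colon M\otimes N\longrightarrow W,\qquad \varphi(m\otimes n)=m\otimes n+n\otimes m,\]
whose well-definedness is immediate from the universal property of the standard tensor product. Picking bases $\{m_i\}$ of $M$ and $\{n_j\}$ of $N$, the image $\varphi(m_i\otimes n_j)=m_i\otimes n_j+n_j\otimes m_i$ has linearly independent components in the two distinct summands, so $\varphi$ is injective and by construction surjective onto $W$.

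The main obstacle is purely bookkeeping: verifying that the three contributions really are independent inside $(M\oplus N)\otimes(M\oplus N)$ and that no further relation among the generators $l\otimes l$ sneaks in. Both are handled cleanly by the coordinate decomposition of Proposition \ref{ds}, and once $W\cong M\otimes N$ is established the desired isomorphism follows.
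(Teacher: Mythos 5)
Your argument is correct and is essentially the proof the paper has in mind: the paper's own proof just cites Proposition \ref{ds} together with the group-theoretic computation in Remark 5 of \cite{roc2}, and that computation is precisely your expansion of $(m+n)\otimes(m+n)$ into the four summands, the extraction of $M\square M$ and $N\square N$, and the identification of the span of the symmetric cross terms $m\otimes n+n\otimes m$ with $M\bigotimes_{mod}N\cong M\otimes N$ via Proposition \ref{ta}. You have simply written out in full the details the paper defers to the literature.
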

\begin{proof} Since Proposition $11$ given in \cite{br} carries over
to the case of Lie algebra (see Proposition \ref{ds}). Hence the
similar proof of group theory given in \cite[Remark 5]{roc2} carries
over to the case of Lie algebra.
\end{proof}
\begin{cor}\label{de1} Let $L$ be a finite dimension Lie algebra, then \[\Gamma(L/L^2)\cong L\square L\cong L/L^2\square L/L^2.\]
\end{cor}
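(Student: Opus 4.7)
The plan is to apply Proposition \ref{ps}(iii) twice, once to $L$ and once to its abelianization.

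First I would argue that the hypothesis of Proposition \ref{ps}(iii) is satisfied for $L$. Since $L/L^2$ is an abelian Lie algebra over a field $F$, it is free as an $F$-module, and hence free (as an abelian Lie algebra) in the sense required by Proposition \ref{ps}(iii). Therefore the sequence
\[0\longrightarrow \Gamma(L/L^2)\stackrel{\psi}\longrightarrow L\otimes L \stackrel{\pi}\longrightarrow L\wedge L \longrightarrow 0\]
is short exact. Since $L\wedge L\cong L\otimes L/L\square L$ by definition, we have $\ker\pi=L\square L$, so the inclusion $\mathrm{Im}(\psi)\subseteq L\square L$ combined with injectivity of $\psi$ yields an isomorphism
\[\Gamma(L/L^2)\cong L\square L.\]

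Next I would apply the same proposition to the Lie algebra $\widetilde{L}:=L/L^2$. Its derived subalgebra is zero, so $\widetilde{L}/\widetilde{L}^2=\widetilde{L}=L/L^2$, and the hypothesis of Proposition \ref{ps}(iii) is again satisfied (trivially, as $\widetilde{L}$ itself is abelian). Consequently
\[\Gamma(L/L^2)=\Gamma\bigl(\widetilde{L}/\widetilde{L}^2\bigr)\cong \widetilde{L}\square\widetilde{L}=(L/L^2)\square (L/L^2).\]

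Chaining the two isomorphisms gives $\Gamma(L/L^2)\cong L\square L\cong (L/L^2)\square(L/L^2)$, as required. The only conceptual point to verify is that the abelian Lie algebra $L/L^2$ qualifies as a ``free Lie algebra'' in the sense of Proposition \ref{ps}(iii); since we are working over a field this is immediate, and I expect this to be the sole subtlety of the argument — the rest is purely formal.
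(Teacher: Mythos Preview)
Your argument is correct but follows a different route from the paper's. You apply Proposition~\ref{ps}(iii) in full generality to both $L$ and $L/L^2$, relying on the fact that over a field the abelianization is automatically a free module (which is indeed the correct reading of the hypothesis in Ellis's original result, despite the phrase ``free Lie algebra'' in the paper). The paper, by contrast, only draws the surjection $\Gamma(L/L^2)\twoheadrightarrow L\square L$ from Proposition~\ref{ps}(ii) and reserves part~(iii) for the one-dimensional case $A(1)$, where it yields $\Gamma(A(1))\cong A(1)\square A(1)\cong A(1)$. It then writes $L/L^2\cong A(1)^{\oplus k}$, expands $\Gamma(L/L^2)$ via Proposition~\ref{gam}(i) and $(L/L^2)\square(L/L^2)$ via Lemma~\ref{de}, checks that both have the same dimension, and concludes that the chain of epimorphisms $\Gamma(L/L^2)\twoheadrightarrow L\square L\twoheadrightarrow (L/L^2)\square(L/L^2)$ must consist of isomorphisms. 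Your approach is shorter and bypasses Lemma~\ref{de} entirely; the paper's approach, though longer, actually uses the lemma it has just proved and sidesteps any worry about what ``free'' means in Proposition~\ref{ps}(iii).
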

\begin{proof} By invoking Proposition \ref{ps} $(ii)$, Im$\psi=L\square L$. On the other hand, Proposition
\ref{ps} $(iii)$ implies that $\Gamma (A(1))\cong A(1)\square
A(1)\cong A(1)$.
 The rest of proof is obtained by the fact that there is a natural epimorphism
 $L\square L\longrightarrow L/L^2\square L/L^2$, Proposition \ref{gam} $(i)$ and Lemma \ref{de}.
\end{proof}
\begin{prop}\label{ha} Let $H(m)$ be a Heisenberg Lie algebra, then
 \[H(m)\otimes H(m)\cong H(m)/{H(m)^2}\otimes H(m)/{H(m)}^2 ,\]
 when $m\geq 2$.
 In the case $m=1$, $H(1)\otimes H(1)$ is an abelian Lie algebra of
 dimension 6.
\end{prop}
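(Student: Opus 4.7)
The idea is to separate $H(m)\otimes H(m)$ into its ``symmetric'' and ``exterior'' parts via the identity $L\wedge L\cong (L\otimes L)/(L\square L)$ recorded in Section~2, so that
\[\dim(L\otimes L)=\dim(L\square L)+\dim(L\wedge L),\]
and to handle each summand with the machinery already assembled. Writing $L=H(m)$, we have $L/L^2\cong A(2m)$; Corollary \ref{de1} then gives $L\square L\cong \Gamma(A(2m))$, and iterating Proposition \ref{gam} shows that $\dim\Gamma(A(n))=n(n+1)/2$, so $\dim(L\square L)=m(2m+1)$. For the exterior square, the Ellis sequence $0\to\mathcal{M}(L)\to L\wedge L\to L^2\to 0$ together with $\dim L^2=1$ and Lemma \ref{h} yields $\dim(L\wedge L)=3$ when $m=1$ and $\dim(L\wedge L)=2m^2-m$ when $m\ge 2$. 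Summing, $\dim(H(m)\otimes H(m))=6$ when $m=1$ and $4m^2$ when $m\ge 2$.

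For $m\ge 2$, Proposition \ref{ps}(i) applied with $N=H(m)^2\subseteq L^2\cap Z(L)$ produces a surjection
\[H(m)\otimes H(m)\twoheadrightarrow H(m)/H(m)^2\otimes H(m)/H(m)^2.\]
The target is $A(2m)\otimes A(2m)\cong A(2m)\bigotimes_{mod}A(2m)$ by Proposition \ref{ta}, and so has dimension $4m^2$. Matching dimensions on both sides forces this surjection to be an isomorphism, which is exactly the claim in this range.

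For $m=1$ the dimension is already known to be $6$, so only the abelianness of $H(1)\otimes H(1)$ remains. Defining relation (iv) expresses every bracket $[x\otimes y,x'\otimes y']$ as a tensor of elements of $L^2=\langle v\rangle$, hence as a scalar multiple of $v\otimes v$. Using $v=[v_1,v_2]$ and relation (iii),
\[v\otimes v=[v_1,v_2]\otimes v=v_1\otimes[v_2,v]-v_2\otimes[v_1,v]=0,\]
since $v$ is central. Thus every Lie bracket in $H(1)\otimes H(1)$ vanishes, and $H(1)\otimes H(1)$ is abelian of dimension $6$.

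The hard part is really bookkeeping rather than insight: once one recognises that Corollary \ref{de1} supplies $\dim(L\square L)$ purely from the abelianisation $L/L^2$, both dimension counts are elementary. The case split is then forced by the anomalous value $\dim\mathcal{M}(H(1))=2$ relative to the general formula $2m^2-m-1$ for $m\ge 2$, which is precisely what makes the lower bound $\dim(H(1)\otimes H(1))\ge 6$ strictly exceed $\dim(A(2)\otimes A(2))=4$.
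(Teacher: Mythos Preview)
Your proof is correct and follows essentially the same route as the paper: both arguments decompose $\dim(L\otimes L)$ as $\dim(L\square L)+\dim\mathcal{M}(L)+\dim L^2$ via Corollary~\ref{de1} and the Ellis exterior sequence, evaluate the pieces using Proposition~\ref{gam} and Lemma~\ref{h}, and then conclude for $m\ge 2$ by comparing dimensions across the natural surjection $H(m)\otimes H(m)\twoheadrightarrow A(2m)\otimes A(2m)$. Your write-up is in fact more explicit in two places where the paper is terse: you invoke Proposition~\ref{ps}(i) to justify the surjection, and you supply the relation-(iii)/(iv) computation showing $v\otimes v=0$ that lies behind the paper's ``one can easily check that $H(1)\otimes H(1)$ is abelian''.
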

\begin{proof}
First suppose that $m\geq2$. According to the Lemmas \ref{h},
\ref{de}  and Corollary \ref{de1},
 \[\begin{array}{lcl} \mathrm{dim}( H(m)\otimes H(m))&=&\mathrm{dim}(H(m)\square H(m))+\mathrm{dim} \mathcal{M}(H(m))+\mathrm{dim} {H(m)}^2\vspace{.3cm}\\
&=&\ds\f{1}{2}2m(2m+1)+2m^2-m-1+1\vspace{.3cm}\\&=&4m^2.\end{array}\]
On the other hand, $ \mathrm{dim} (H(m)/{H(m)^2}\otimes
H(m)/{H(m)}^2)=4m^2$. Hence the result obtained by the natural Lie
epimorphism $H(m)\otimes H(m)\longrightarrow H(m)/{H(m)^2}\otimes
H(m)/{H(m)}^2.$

 In the case $m=1$, by a similar fashion $H(1)\otimes H(1)$ is a Lie algebra of dimension 6.
 One can easily check that $H(1)\otimes H(1)$ is abelian, which completes the
 proof.
\end{proof}
\begin{thm}\label{mt} Let  $L$ be an $n$-dimensional non-abelian  nilpotent Lie algebra   with derived subalgebra of dimension $m$. Then
\[\mathrm{dim} (L\otimes L)\leq (n-m)(n-1)+2.\] In particular, for  $m=1$ the equality holds if and only if
$L\cong H(1)\oplus A(n-3)$.
\end{thm}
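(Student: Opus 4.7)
The plan is to induct on $m = \mathrm{dim}\, L^2$, with the main reduction supplied by Proposition \ref{ps}(i). The base case $m = 1$ and the characterization of equality are settled by the classification of Lie algebras with one-dimensional derived subalgebra together with an explicit Schur multiplier computation.

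For the base case $m = 1$, nilpotency together with $\mathrm{dim}\, L^2 = 1$ forces $L^2 \subseteq Z(L)$, so $L$ has class two, and a standard normal-form argument applied to the alternating form $L/L^2 \times L/L^2 \to L^2$ yields $L \cong H(k) \oplus A(n-2k-1)$ for some $k \geq 1$. Combining Proposition \ref{ps}(ii) with Corollary \ref{de1} (which makes $\psi$ injective) gives
\[
\mathrm{dim}(L\otimes L) = \mathrm{dim}\,\Gamma(L/L^2) + \mathrm{dim}\,\mathcal{M}(L) + 1 = \frac{n(n-1)}{2} + \mathrm{dim}\,\mathcal{M}(L) + 1,
\]
and Lemma \ref{h} together with the direct-sum decomposition of the Schur multiplier expresses $\mathrm{dim}\,\mathcal{M}(H(k)\oplus A(n-2k-1))$ as a closed form in $n$ and $k$. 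An arithmetic check then shows that the bound $(n-1)^2+2$ holds in every case, and that equality is attained precisely when $k = 1$, \emph{i.e.}\ when $L \cong H(1) \oplus A(n-3)$.

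For the inductive step $m \geq 2$, choose a one-dimensional ideal $N \subseteq L^2 \cap Z(L)$ (which exists because $L$ is nilpotent and $L^2 \neq 0$). The quotient $L/N$ is non-abelian nilpotent of dimension $n-1$ with derived subalgebra of dimension $m-1 \geq 1$, so the inductive hypothesis applies and gives $\mathrm{dim}(L/N \otimes L/N) \leq (n-m)(n-2) + 2$. Proposition \ref{ps}(i) yields
\[
\mathrm{dim}(L\otimes L) \leq \mathrm{dim}(N\otimes L) + \mathrm{dim}(L/N\otimes L/N),
\]
and the defining relation $n'\otimes[l_1,l_2] = {}^{l_2}n'\otimes l_1 - {}^{l_1}n'\otimes l_2$ vanishes whenever $n' \in N \subseteq Z(L)$. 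Hence $N \otimes L^2 = 0$ inside $N\otimes L$, so $\mathrm{dim}(N\otimes L) \leq \mathrm{dim}(N)\cdot\mathrm{dim}(L/L^2) = n-m$. Adding the two bounds produces exactly $(n-m)(n-1) + 2$.

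The inductive step is almost forced once one recognizes that central elements annihilate commutators inside the Lie tensor product. The real work lies in the base case: one must not only compute the Schur multiplier of each candidate $H(k) \oplus A(n-2k-1)$ via Lemma \ref{h}, but also verify the arithmetic identity singling out $k = 1$ as the unique equality case, ruling out every $k \geq 2$.
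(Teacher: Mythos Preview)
Your proof is correct, and the inductive step $m\geq 2$ is essentially identical to the paper's: both pick a one-dimensional $N\subseteq L^2\cap Z(L)$, apply Proposition~\ref{ps}(i), and bound $\dim(N\otimes L)$ by $n-m$ via the observation that $N$ acts trivially (the paper phrases this last point through Proposition~\ref{ta}, you argue it directly from relation~(iii); same content).

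The base case $m=1$ takes a genuinely different route. The paper first decomposes $L\cong H(k)\oplus A(n-2k-1)$ exactly as you do, but then computes $\dim(L\otimes L)$ via the direct-sum formula for the \emph{tensor square} (Proposition~\ref{ds}) together with the explicit value of $\dim\big(H(k)\otimes H(k)\big)$ from Proposition~\ref{ha}. You instead use the identity $\dim(L\otimes L)=\dim\Gamma(L/L^2)+\dim\mathcal{M}(L)+\dim L^2$ and then compute $\mathcal{M}(L)$ through a K\"unneth-type direct-sum formula for the Schur multiplier. Both routes work and yield the same dichotomy $(n-1)^2+2$ versus $(n-1)^2$ according as $k=1$ or $k\geq 2$. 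The paper's route stays entirely within the results it has stated; yours is arguably more streamlined but invokes the direct-sum formula for $\mathcal{M}$, which is standard but not proved in the paper. The paper's approach also makes the split between $k=1$ and $k\geq 2$ visible earlier, since Proposition~\ref{ha} already separates these two cases at the level of $H(k)\otimes H(k)$.
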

\begin{proof} First suppose that $m=1$, since $L^2\subseteq Z(L)$ by using Proposition \ref{ha}, we may assume that $L^2\subsetneq Z(L)$.
Thus $L^2$ has a complement $A$ in $Z(L)$. The same argument shows
that $Z(L)/L^2$ has a complement $H/L^2$ in $L/L^2$. One can check
that $H$ is a Heisenberg Lie algebra and $L=H\oplus A.$   Hence by
Proposition \ref{ds},
   \[\begin{array}{lcl}\mathrm{dim}((H\oplus A)\otimes (H\oplus A))=\mathrm{dim}(H\otimes H)+2 \mathrm{dim}(H\otimes A)+\mathrm{dim}(A\otimes A)
   \end{array}\]
   On the other hands, Proposition \ref{ta} implies that \[H\otimes A\cong H/H^2\bigotimes_{mod}A.\]
   Two cases may be considered respect to the $\mathrm{dim}H$.
   \\Case $I.$ If $H\cong H(1)$, then by invoking of Propositions \ref{ds} and \ref{ha}, we have
   \[\begin{array}{lcl}\mathrm{dim} ((H\oplus A)\otimes (H\oplus A))&=&6+4(n-3)+(n-3)^2\vspace{.3cm}\\&=&(n-1)^2+2.
   \end{array}\]
   \\Case $II.$ If $H\cong H(m)$, then
   \[\begin{array}{lcl}\mathrm{dim} ((H\oplus A)\otimes (H\oplus A))&=&4m^2+4m(n-2m-1)+(n-2m-1)^2\vspace{.3cm}\\&=&(n-1)^2.
   \end{array}\]
  Let $m\geq 2$ and consider an ideal $N$ contained in $Z(L)\cap L^2$, induction hypothesis and Proposition \ref{ps} $(i)$ deduce that
\[\mathrm{dim}(L\otimes L)\leq \mathrm{dim}(L/N\otimes L/N)+\mathrm{dim}(N\otimes L).\]
Now by virtue of Proposition \ref{ta}, \[\mathrm{dim} (N\otimes
L)=\mathrm{dim}(N\bigotimes_{mod}L/L^2).\] Hence
\[\mathrm{dim}(L\otimes L)\leq n-m+(n-m)(n-2)+2=(n-1)(n-m)+2,\] as required.
\end{proof}
\begin{prop}Let  $L$ be an $n$-dimensional non-abelian  nilpotent Lie algebra  with derived subalgebra of dimension $m$ $(n\geq 4, m\geq 2)$. Then
\[\mathrm{dim} (L\otimes L)\leq (n-m)(n-1)+1~\text{or}~\mathrm{dim} (L\otimes L)\leq (n-m)(n-1)+2< n(n-m).\]
\end{prop}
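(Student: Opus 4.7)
The approach is to split cases on the difference $n - m$. Theorem \ref{mt} always gives $\dim(L\otimes L) \leq (n-m)(n-1)+2$; when $n - m \geq 3$, the strict inequality $(n-m)(n-1) + 2 < n(n-m)$ holds (it reduces to $n - m > 2$), yielding the second alternative of the conclusion. Thus the substantive work lies in the case $n - m = 2$, where $L$ is two-generated and one must prove the sharper bound $\dim(L \otimes L) \leq 2m+3 = (n-m)(n-1) + 1$.

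I would handle the case $n - m = 2$ by induction on $m \geq 2$. For the inductive step ($m \geq 3$), nilpotency of $L$ yields a one-dimensional ideal $N \subseteq Z(L) \cap L^2$. The quotient $L/N$ has $\dim(L/N) = m+1$ and $\dim((L/N)^2) = m - 1 \geq 2$, so $\dim(L/N) - \dim((L/N)^2) = 2$ and the inductive hypothesis applies to give $\dim((L/N) \otimes (L/N)) \leq 2(m-1) + 3 = 2m+1$. As in the proof of Theorem \ref{mt}, Proposition \ref{ta} together with relation (iii) yields $\dim(N \otimes L) = \dim N \cdot \dim(L/L^2) = 2$, and combining with the right-exact sequence of Proposition \ref{ps} $(i)$ gives $\dim(L\otimes L) \leq 2 + (2m+1) = 2m+3$.

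The base case $m = 2, n = 4$ is the main obstacle. The classification of four-dimensional nilpotent Lie algebras with two-dimensional derived subalgebra forces $L$ to be the filiform algebra with basis $x_1, x_2, x_3, x_4$ and non-zero brackets $[x_1, x_2] = x_3$, $[x_1, x_3] = x_4$. Corollary \ref{de1} and Proposition \ref{ps} $(ii)$ give the identity $\dim(L \otimes L) = \dim \Gamma(L/L^2) + \dim(L \wedge L) = 3 + \dim(L \wedge L)$, so it suffices to show $\dim(L \wedge L) \leq 4$. Applying the exterior analogue of the tensor relation (iii) to the identity $[x_2, x_3] = 0$ in $L$ forces $x_2 \wedge x_4 = 0$ and $x_3 \wedge x_4 = 0$, reducing the candidate spanning set of $L \wedge L$ to $\{x_1 \wedge x_2,\, x_1 \wedge x_3,\, x_1 \wedge x_4,\, x_2 \wedge x_3\}$; so $\dim(L \otimes L) \leq 7 = 2m+3$, completing the base case.
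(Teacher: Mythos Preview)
Your proof follows the same overall strategy as the paper: reduce to the case $n-m=2$ (since for $n-m\geq 3$ the inequality $(n-m)(n-1)+2<n(n-m)$ is immediate), and in that case establish the sharper bound $\dim(L\otimes L)\leq (n-m)(n-1)+1$ by induction on $m$, using the exact sequence of Proposition~\ref{ps}~(i) together with $\dim(N\otimes L)=n-m=2$ for the inductive step. The only substantive difference is in the base case $(n,m)=(4,2)$: you invoke the classification of four-dimensional nilpotent Lie algebras to identify $L$ as the filiform algebra and bound $L\wedge L$ by an explicit computation, whereas the paper appeals to Theorem~\ref{sm} --- since $m=2$ excludes all the cases $t(L)\leq 3$ listed there, one gets $t(L)\geq 4$, hence $\dim\mathcal{M}(L)\leq 2$ and $\dim(L\otimes L)\leq 3+2+2=7$. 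The paper's route is shorter and stays within the results already assembled; yours is more hands-on but imports an external classification. One small correction to your computation: the relation $x_3\wedge x_4=0$ does not follow from $[x_2,x_3]=0$ via relation~(iii); you need instead, for example, $[x_1,x_4]=0$, which gives $0=x_1\wedge[x_4,x_2]-x_4\wedge[x_1,x_2]=-x_4\wedge x_3$.
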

\begin{proof}
Let $\mathrm{dim} (L\otimes L)>(n-m)(n-1)+1$, thus by Theorem
\ref{mt}, we should have $\mathrm{dim} (L\otimes L)=(n-m)(n-1)+2$.
By contrary we assume that $\mathrm{dim} (L\otimes L)=n(n-m)$. Hence
$m+n=2$.

 First suppose that $n=4$. Since $m=2$, Theorem \ref{sm} implies that $\mathrm{dim}\mathcal{M}(L)\leq 2$ and hence
$\mathrm{dim}(L\otimes L)\leq 7$. Now let $n\geq 5$,  induction
hypothesis and Proposition \ref{ps} $(i)$ implies  that
\[\mathrm{dim} (L\otimes L)\leq (n-m)(n-1)+1,\] which is a contradiction hence the result obtained.
\end{proof}

The last proposition shows developed Rocco's bound for a nilpotent
Lie algebra is less than the bound in Theorem \ref{sm} except for
the case $L\cong H(1)$.


\begin{thebibliography}{20}


\bibitem{es3}
  P. Batten, K. Moneyhun, and E. Stitzinger, {\em On characterizing nilpotent Lie algebras by their multipliers},
  Comm. Algebra. (24)14 (1996),  4319--4330.

\bibitem{es2}
 P. Batten and E. Stitzinger, {\em On covers of Lie algebras}, Comm. Algebra (24)14 (1996), 4301--4317.

\bibitem{br} R. Brown, D.L. Johnson, and E.F. Robertson,
 {\em Some Computations of non Abelian tensor products of groups},
J. Algebra  111 (1987),  177--202.
\bibitem{el1}
 G. Ellis, {\em Crossed modules and their higher dimensional analogues}, Ph.D. thesis, University of Wales, 1984.

\bibitem{el3}
 G. Ellis, {\em Nonabelian exterior products of Lie algebras and an exact sequence in the homology of Lie algebras.}
J. Pure Appl. Algebra 46 (1987),  111--115.


\bibitem{el2}
 G. Ellis, {\em A nonabelian tensor product of Lie algebras}, Glasgow Math. J. 33 (1991), 101--120.


\bibitem{es4}
 P. Hardy and  E. Stitzinge, {\em On characterizing nilpotent Lie algebras by their multipliers $t(L) = 3, 4, 5, 6$},
  Comm. Algebra (26)11 (1998),  3527--3539.

\bibitem{mo}
K. Moneyhun, {\em Isoclinisms in Lie algebras}, Algebras Groups
Geom. 11 (1994),  9--22.

\bibitem{ni}
P. Niroomand, {\em Non abelian tensor square of non abelian prime
power groups}, submitted.


\bibitem{roc}
N.R. Rocco, {\em On a construction related to the nonabelian tensor
square of a group}, Bol. Soc. Brasil. Mat. (22)1 (1991)  63--79.


\bibitem{roc2}

N. R. Rocco,  {\em A presentation for a crossed embedding of finite
solvable groups}, Comm. Algebra (22)6 (1994), 1975--1998.


\bibitem{sa}
A.R. Salemkar,  H. Tavallaee,  H. Mohammadzadeh, and B. Edalatzadeh,
{\em On the non-abelian tensor product of Lie algebras},  Linear
Multilinear Algebra 1 (2009),  1--9.

\bibitem{wh}
 J.H.C. Whitehead, {\em A certain  exact sequence}, Ann. of Math. 52 (1950),  51-110.

\end{thebibliography}
\end{document}